\newtheoremstyle{standard}%
{9pt}%
{9pt}%
{\it}
{}%
{\bfseries}%
{}
{ }%
{#3}%
\newcommand{\db}[1]{(\!({#1})\!)}
\numberwithin{equation}{section}
\newcommand{\N}{{\mathbb N}}
\newcommand{\Z}{{\mathbb Z}}
\newcommand{\Q}{{\mathbb Q}}
\newcommand{\C}{{\mathbb C}}
\newcommand{\wi}{i}
\newcommand{\wj}{j}
\newcommand{\wk}{k}
\newcommand{\wx}{x}
\newcommand{\ws}{s}
\newcommand{\wm}{m}
\newcommand{\wn}{n}
\newcommand{\hei}{H}
\newcommand{\module}{M}
\newcommand{\sN}{{\mathscr N}}
\newcommand{\sU}{{\mathscr U}}
\newcommand{\wpp}{p}
\newcommand{\wq}{q}
\newcommand{\mmbox}[1]{\mbox{\scriptsize #1}}
\newcommand{\nor}{\begin{subarray}{c}\circ\\\circ\end{subarray}}
\newcommand{\fg}{{\mathfrak g}}
\newcommand{\ul}[1]{{#1}}
\newcommand{\lw}{w}
\newcommand{\ulw}{v}
\newcommand{\sv}{P}
\newcommand{\vac}{{\mathbf 1}}
\newcommand{\wot}{t}
\newcommand{\wl}{l}
\DeclareMathOperator{\wt}{wt}
\newtheorem{lemma}{Lemma}[section]
\newtheorem{theorem}[lemma]{Theorem}
\theoremstyle{definition}
\newtheorem{remark}[lemma]{Remark}
\theoremstyle{standard}
\title{Simple weak modules for the
fixed point subalgebra of the Heisenberg vertex operator algebra of rank $1$ by an automorphism of order $2$ and Whittaker vectors}
\author{Kenichiro Tanabe\footnote{Research was partially supported by the Grant-in-aid
(No. 15K04770) for Scientific Research, JSPS.}\\\\
Department of Mathematics\\
Hokkaido University\\
Kita 10, Nishi 8, Kita-Ku, Sapporo, Hokkaido, 060-0810\\
Japan\\\\
ktanabe@math.sci.hokudai.ac.jp}
\date{}
\begin{document}
\maketitle

\begin{abstract}
Let $M(1)$ be the vertex operator algebra with the Virasoro element $\omega$ associated to
the Heisenberg algebra of rank $1$ and let $M(1)^{+}$ be the subalgebra of $M(1)$ consisting of the fixed points of an automorphism of $M(1)$ of order $2$. 
We classify the simple weak $M(1)^{+}$-modules with a non-zero element $\lw$ such that
for some integer $\ws\geq 2$,  $\omega_i\lw\in\C\lw$ ($i=\lfloor \ws/2\rfloor+1,\lfloor \ws/2\rfloor+2,\ldots,\ws-1$), 
$\omega_{\ws}\lw\in\C^{\times}\lw$, and $\omega_i\lw=0$ for all $i>\ws$. 
The result says that
any such simple weak $M(1)^{+}$-module 
is isomorphic to some simple weak $M(1)$-module or to some $\theta$-twisted simple weak $M(1)$-module.
\end{abstract}

\bigskip
\noindent{\it Mathematics Subject Classification.} 17B69

\noindent{\it Key Words.} vertex operator algebras, weak modules, twisted weak modules, Whittaker vectors.

\section{\label{section:introduction}Introduction}
Let $V$ be a vertex operator algebra and $G$ a finite automorphism group of $V$.
One of the main problems about $V^G$ is to describe the $V^G$-modules in terms of $V$ and $G$.
It is conjectured that under some conditions on $V$,
every simple $V^G$-module is contained in some simple $g$-twisted 
$V$-module for some $g\in G$ (cf. \cite{DVVV}).
Let $M(1)$ be the vertex operator algebra associated to
the Heisenberg algebra of rank $1$ and let $M(1)^{+}$ be the subalgebra of $M(1)$ consisting of the fixed points of an automorphism $\theta$ of $M(1)$ of order $2$ 
(see \eqref{eq:theta}). 
The conjecture above is confirmed for many examples including $M(1)^{+}$ (cf. \cite{AD,DN1,DN2,DN3,TY1,TY2}).
In those examples, they classify the simple $V^{G}$-modules directly
by investigating the Zhu algebra, which is an associative $\C$-algebra introduced in \cite{Z},
since \cite[Theorem 2.2.1]{Z} says that
for a vertex operator algebra $V$
there is a one to one correspondence between the set of all isomorphism classes of simple $\N$-graded weak $V$-modules 
and that of simple modules for the Zhu algebra associated to $V$, where we note that
an arbitrary $V$-module is automatically an $\N$-graded weak $V$-module.

We now turn to non-$\N$-graded weak $V$-modules.
The conjecture makes sense even for non-$\N$-graded weak $V$-modules, however
it has not been confirmed for any example so far since no useful tool like the Zhu algebras is known for these modules.
In this paper we confirm this conjecture for  a class of simple non-$\N$-graded weak $M(1)^{+}$-modules defined by using Whittaker vectors  
for the Virasoro algebra as explained below.

Whittaker modules (Whittaker vectors) are non weight modules defined over various Lie algebras, first appeared in \cite{AP} for $sl_{2}$.
They are defined and studied for all semisimple finite-dimensional complex Lie algebras in \cite{K}, for the Virasoro algebra in \cite{OW} and \cite{LGZ},
and for the affine Kac--Moody algebra $A_1^{(1)}$ in \cite{ALZ}.
Whittaker modules for the Virasoro algebra also appear in the study of 
two-dimensional conformal field theory in physics(cf. \cite{G}, \cite{GT}).
Whittaker modules for a vertex operator algebra $V$ are not defined in general, however, we note
that for the Virasoro element (the conformal vector) $\omega$ of $V$,
$\omega_{n+1}=L(n),n\in\Z$ satisfy the Virasoro algebra relations (cf. \cite[(1.3.4)]{LL}).
Thus, based on the definition of Whittaker vectors for the Virasoro algebra in \cite{LGZ,OW},
it is natural to define Whittaker vectors for $\omega$ 
as follows:
for a weak $V$-module $M$, a non-zero element $\lw$ of $M$ is called a {\it Whittaker vector for $\omega$}  if 
there exists an integer $\ws$ with $\ws\geq 2$ and 
$\ul{\lambda}=
(\lambda_{\lfloor \ws/2\rfloor+1},\lambda_{\lfloor \ws/2\rfloor+2},\ldots,\lambda_{\ws})\in \C^{\ws-\lfloor \ws/2\rfloor}$
with $\lambda_{\ws}\neq 0$ such that
\begin{align}
\omega_{\wi}\lw=
\left\{
\begin{array}{ll}
\lambda_{\wi}\lw,& \wi=\lfloor\ws/2\rfloor +1,\lfloor\ws/2\rfloor +2,\ldots,\ws.\\
0,&\mbox{if  }i>\ws
\end{array}\right.
\end{align}
where $\lfloor\ws/2\rfloor=\max\{i\in\Z\ |\ i\leq \ws/2\}$.
We call $\ul{\lambda}$ the {\it type} of $\lw$. 

The following is the main result of this paper, which implies that
any simple weak $M(1)^{+}$-module with at least one Whittaker vector
is isomorphic to some simple weak $M(1)$-module or to some $\theta$-twisted simple weak $M(1)$-module.
Namely, the conjecture holds for such simple weak $M(1)^{+}$-modules.
\begin{theorem}
\label{theorem:classification-module}
Let $\module$ be a non-zero weak $M(1)^{+}$-module generated by a Whittaker vector for $\omega$.
Then, $M$ is simple. 
The following is a complete set of representatives of equivalence classes of simple weak $M(1)^{+}$-modules with 
at least one Whittaker vector for $\omega$:
\begin{enumerate}
\item
$M(1,\ul{\zeta})\cong M(1,-\ul{\zeta}),\ \ul{\zeta}\in \C^{r}\times \C^{\times}$, $r=1,2,\ldots$.
\item
$M(1,\ul{\zeta})(\theta)\cong M(1,-\ul{\zeta})(\theta),\ \ul{\zeta}\in \C^{r-1}\times \C^{\times}$, $r=1,2,\ldots$.
\end{enumerate}
\end{theorem}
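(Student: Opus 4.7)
The plan is to split the proof into two complementary parts. First, identify the Whittaker vectors and compute their types $\underline{\lambda}$ inside the candidate modules $M(1,\underline{\zeta})$ and $M(1,\underline{\zeta})(\theta)$; this produces an explicit bijection between Whittaker types and pairs $\{\underline{\zeta},-\underline{\zeta}\}$. Second, show that every simple weak $M(1)^{+}$-module containing a Whittaker vector is isomorphic to one of these candidates.

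For the first part, let $\ulw$ be the cyclic vector of $M(1,\underline{\zeta})$, so that $h(k)\ulw=\zeta_k\ulw$ for $0\le k\le r$ and $h(k)\ulw=0$ for $k>r$. The Sugawara formula $L(n)=\tfrac{1}{2}\sum_{k\in\Z}\mathopen{:}h(k)h(n-k)\mathclose{:}$ then yields, by a direct bookkeeping of modes, $L(2r)\ulw=\tfrac{1}{2}\zeta_r^{2}\,\ulw\neq 0$, $L(n)\ulw=0$ for $n>2r$, and $L(n)\ulw$ a specific polynomial in the $\zeta_i$ times $\ulw$ for $r\le n<2r$. Hence $\ulw$ is a Whittaker vector with top index $\ws=2r+1$, and the resulting equations $\lambda_{2r+1}=\tfrac{1}{2}\zeta_r^{2}$, $\lambda_{2r}=\zeta_r\zeta_{r-1}$, and so on down to $\lambda_{r+1}=\tfrac{1}{2}\sum_{k=0}^{r}\zeta_k\zeta_{r-k}$ can be solved recursively from the top to recover $\underline{\zeta}$ from $\underline{\lambda}$, uniquely up to the sign $\underline{\zeta}\leftrightarrow-\underline{\zeta}$. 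The parallel computation in $M(1,\underline{\zeta})(\theta)$, using the half-integer Heisenberg modes $h(k+\tfrac{1}{2})$, shows that its cyclic vector is a Whittaker vector with $\ws=2r$, again with a recursive inversion giving $\underline{\zeta}$ up to sign. In particular, the parity of $\ws$ distinguishes the untwisted from the twisted family.

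For the second part, let $\module$ be a simple weak $M(1)^{+}$-module generated by a Whittaker vector $\lw$ of type $\underline{\lambda}$ with top index $\ws$. The parity of $\ws$ selects the candidate family, and the recursive equations above produce candidate parameters $\pm\underline{\zeta}$. One then constructs an $M(1)^{+}$-module surjection from the corresponding model to $\module$ by sending the cyclic vector of the model to $\lw$: surjectivity is immediate from the cyclicity of $\lw$, and injectivity follows because the restriction of $M(1,\pm\underline{\zeta})$ (respectively $M(1,\pm\underline{\zeta})(\theta)$) to $M(1)^{+}$ is itself simple, since $\zeta_r\ne0$ (respectively $\zeta_{r-1/2}\ne0$) rules out a $\theta$-invariant cyclic vector. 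Simplicity of $\module$ follows as a direct consequence.

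The main obstacle is producing this surjection: from the $M(1)^{+}$-action on $\lw$ alone one must reconstruct enough Heisenberg-type structure to recognize $\module$ as (the restriction of) a weak or $\theta$-twisted weak $M(1)$-module. Concretely, beyond $\omega$ one exploits the degree-$4$ generator of $M(1)^{+}$ and its higher-weight relatives---whose modes, up to Virasoro corrections, act as polynomials in the operators $h(k)^{2}$ (which do preserve $M(1)^{+}$)---to read off the squares $\zeta_k^{2}$ and hence determine $\underline{\zeta}$ up to the simultaneous sign flip, which matches precisely the $\theta$-action on weak $M(1)$-modules. This, combined with the restriction-of-twisted-modules results from the preceding sections, completes the identification.
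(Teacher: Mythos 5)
Your first part (computing the Whittaker types of the cyclic vectors of $M(1,\ul{\zeta})$ and $M(1,\ul{\zeta})(\theta)$ and inverting $\ul{\zeta}\mapsto\ul{\lambda}$ up to sign, with the parity of $\ws$ separating the two families) matches the paper's \eqref{eq:un-type}, \eqref{eq:untwist-zeta-lambda}, \eqref{eq:twisted-type}, \eqref{eq:twist-zeta-lambda} and is fine. The genuine gap is in your second part: you assert that one can construct an $M(1)^{+}$-module surjection from the model onto $\module$ ``by sending the cyclic vector of the model to $\lw$,'' but such a map is well defined only if every relation satisfied by the cyclic vector of the model is also satisfied by $\lw$, i.e.\ only if the entire $M(1)^{+}$-action on the cyclic submodule generated by a Whittaker vector is forced by its type. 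That is the actual content of the theorem, and your justification for it is circular: you say the modes of $J$ ``act as polynomials in the operators $h(k)^{2}$,'' but an abstract weak $M(1)^{+}$-module carries no Heisenberg operators $h(k)$ at all --- recovering such operators is precisely the identification with a (twisted) weak $M(1)$-module that you are trying to prove. So ``reading off the squares $\zeta_k^2$'' from the $J$-action presupposes the conclusion.

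The paper closes this gap with machinery you have no substitute for. It first constructs a \emph{universal} weak $M(1)^{+}$-module $N$ with a Whittaker vector of the given type subject to $Y_N(J,x)\ulw\in x^{-2\ws-2}N[[x]]$, and then proves (Lemma \ref{lemma:gen-omega}) that $N=\langle\omega\rangle\ulw$ and that each $J_i\ulw$ is a specific polynomial in the $\lambda_j$'s times $\ulw$. This step rests on the two explicit relations $P^{(9)}$ and $P^{(10)}$ in $M(1)^{+}$ (found by computer algebra) together with the mode computations of Lemmas \ref{lemma:J-decrease} and \ref{lemma:top}; it is the computational heart of the proof and cannot be waved away by appeal to ``higher-weight relatives'' of $J$. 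Once $N=\langle\omega\rangle\ulw$ is known, $N$ is a Whittaker module for the Virasoro algebra, hence simple by the cited results of Ondrus--Wiesner and L\"u--Guo--Zhao, and both $M$ and the appropriate $M(1,\ul{\zeta})$ or $M(1,\ul{\zeta})(\theta)$ are nonzero quotients of $N$, hence isomorphic to it. Note also that your one-line justification that $M(1,\ul{\zeta})$ restricts to a simple $M(1)^{+}$-module (``$\zeta_r\neq 0$ rules out a $\theta$-invariant cyclic vector'') is not a proof; the paper's Lemma \ref{lemma:simple-hei} needs a genuine argument via the determinant computation of Lemma \ref{lemma:determinant}, although in the paper's logic this lemma is only used to show that every type is realized, not to supply injectivity.
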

Here, $M(1,\ul{\zeta})$ are simple weak $M(1)$-modules defined in \eqref{eq:untwist-induced} 
and $M(1,\ul{\zeta})(\theta)$ are simple $\theta$-twisted weak $M(1)$-modules defined in \eqref{eq:twist-induced}.
Let us explain the basic idea briefly.
It is shown in \cite[Theorem 2.7 (2)]{DG} that $M(1)^{+}$ is generated by the Virasoro element $\omega$ and homogeneous $J\in M(1)^{+}$ of weight $4$.
For these $\omega$ and $J$, we first find two relations on $M(1)^{+}$ with the help of a computer algebra system Risa/Asir.
Using these relations and the Borcherds identity,
we show that an arbitrary weak $M(1)^{+}$-module $M$ 
generated by a Whittaker vector $\lw$ for $\omega$ 
as in the theorem is generated by $\lw$
 as a module for the Virasoro algebra associated to $\omega$, and the actions of $J$ on $M$ are uniquely
determined by the actions of $\omega$ on $M$.
Thus, $M$ is a Whittaker module for the Virasoro algebra in the sense of \cite{LGZ,OW}, and it follows by \cite[Corollary 4.2]{OW} and \cite[Theorem 7]{LGZ} that $M$ is simple and is uniquely determined by the type 
of a Whittaker vector in $M$. 
Therefore $M$ is isomorphic to one of the weak $M(1)^{+}$-modules listed in the theorem which contains a Whittaker vector of the same type.

The organization of the paper is as follows. 
In Section \ref{section:preliminary} we recall some basic properties of the
vertex operator algebra $M(1)$ associated to the Heisenberg algebra of rank $1$ and its weak modules.
In Section \ref{section:main} we give a proof
of Theorem \ref{theorem:classification-module}.

\section{\label{section:preliminary}Preliminary}
We assume that the reader is familiar with the basic knowledge on
vertex algebras as presented in \cite{B,FLM,LL}. 

Throughout this paper, $\N$ denotes the set of all non-negative integers,
$\C^{\times}=\{z\in\C\ |\ z\neq 0\}$ and
$(V,Y,{\mathbf 1},\omega)$ is a vertex operator algebra.
Recall that $V$ is the underlying vector space, 
$Y(-,\wx)$ is the linear map from $V\otimes_{\C}V$ to $V\db{x}$,
${\mathbf 1}$ is the vacuum vector, and $\omega$ is the Virasoro element.
A weak $V$-module $M$ (cf. \cite[p.157]{Li1}) is called {\it $\N$-graded}
if $M$ admits a decomposition $M=\oplus_{j=0}^{\infty}M(j)$ such that
\begin{align}
\label{eq:N-graded}
a_{k}M(j)\subset M(\wt a+j-k-1)
\end{align}
for homogeneous $a\in V$, $j\in \N$, and $k\in\Z$.
For $i\in\Z$, define
\begin{align}
\Z_{< i}&=\{k\in\Z\ |\ k< i\}\mbox{ and }\Z_{> i}=\{k\in\Z\ |\ k> i\}.
\end{align}

In this section, we recall the vertex operator algebra $M(1)$ associated to the Heisenberg algebra of rank $1$, its automorphism $\theta$ defined in
\eqref{eq:theta}, the subalgebra $M(1)^{+}$ of $M(1)$ consisting of the fixed points of $\theta$,
and ($\theta$-twisted) weak $M(1)$-modules.
Let $\hei$ be a one dimensional vector space equipped with a nondegenerate symmetric bilinear form
$\langle -, -\rangle$. We take $h\in H$ such that $\langle h,h\rangle=1$.
Set a Lie algebra
\begin{align}
\hat{\hei}&=\hei\otimes \C[t,t^{-1}]\oplus \C K
\end{align} 
with the Lie bracket relations 
\begin{align}
[\alpha\otimes t^{m},\beta\otimes t^{n}]&=m\langle\alpha,\beta\rangle\delta_{m+n,0}K,&
[K,\hat{\hei}]&=0
\end{align}
for $\alpha,\beta\in \hei$ and $m,n\in\Z$.
For $\alpha\in H$ and $n\in\Z$, $\alpha(n)$ denotes $\alpha\otimes t^{n}\in\widehat{H}$. 
Set two Lie subalgebras of $\widehat{H}$:
\begin{align}
\widehat{H}_{{\geq 0}}&=\bigoplus_{n\geq 0}H\otimes t^n&\mbox{ and }&&
\widehat{H}_{<0}&=\bigoplus_{n\leq -1}H\otimes t^n.
\end{align}

Let $r$ be a non-negative integer.
For an $r+1$-tuple $\ul{\zeta}=(\zeta_0,\ldots,\zeta_{r})\in\C^{r+1}$,
$\C u_{\ul{\zeta}}$ denotes a one dimensional $\widehat{H}_{{\geq 0}}$-module uniquely determined
by
\begin{align}
h(i)\cdot u_{\zeta}&
=\left\{
\begin{array}{ll}
\zeta_iu_{\zeta}&\mbox{ for }i=0,\ldots,r,\\
0&\mbox{ for }i>r
\end{array}
\right.&&\mbox{ and }&
K\cdot u_{\zeta}&=u_{\zeta}.
\end{align}
We take an $\widehat{H}$-module 
\begin{align}
\label{eq:untwist-induced}
M(1,\zeta)&=\sU (\widehat{H})\otimes_{\sU (\widehat{H}_{\geq 0})}\C u_{\zeta}
\cong \sU(\widehat{H}_{<0})\otimes_{\C}\C u_{\ul{\zeta}}
\end{align}
where $\sU(\fg)$ is the universal enveloping algebra of a Lie algebra $\fg$.
Then, $M(1)=M(1,(0))$ has a vertex operator algebra structure with
the Virasoro element
\begin{align}
\omega&=\dfrac{h(-1)^2\vac}{2}
\end{align}
and $M(1,\ul{\zeta})$ is a simple weak $M(1)$-module for any $\ul{\zeta}\in\C^{r+1}$.
The vertex operator algebra $M(1)$ is called the {vertex operator algebra associated to
 the Heisenberg algebra} $\oplus_{0\neq n\in\Z}H\otimes t^{n}\oplus \C K$. 
If $r=0$, then $M(1,(\lambda_0))$ is a simple $M(1)$-module. 
Since for $i=r+1,r+2,\ldots,2r+1$,
\begin{align}
\label{eq:un-type}
\omega_{i}u_{\ul{\zeta}}&=\dfrac{1}{2}\sum_{\begin{subarray}{l}j,k\in\N\\j+k=i-1\end{subarray}}
h(j)h(k)u_{\ul{\zeta}}=\dfrac{1}{2}\sum_{\begin{subarray}{l}j,k\in\N\\j+k=i-1\end{subarray}}
\zeta_{j}\zeta_{k}u_{\ul{\zeta}}\in\C u_{\ul{\zeta}},
\end{align}
$u_{\ul{\zeta}}$ is a Whittaker vector of type $(\sum_{\begin{subarray}{l}j,k\in\N\\j+k=i-1\end{subarray}}
\zeta_{j}\zeta_{k}/2)_{i=r+1}^{2r+1}$ for $\omega$.
If $r\geq 1$ and $\zeta_{r}\neq 0$, then $\lw$ is an 
eigenvector for $\omega_{2r+1}$ with eigenvalue $\zeta_{r}^2/2$
and hence $M(1,\ul{\zeta})$ is not an $\N$-graded weak $M(1)$-module by \eqref{eq:N-graded}.
We also note that the map 
\begin{align}
\label{eq:untwist-zeta-lambda}
\C^{r}\times \C^{\times}&\rightarrow \C^{r}\times \C^{\times}\nonumber\\
(\zeta_0,\ldots,\zeta_r)&\mapsto (\dfrac{1}{2}\sum_{\begin{subarray}{l}j,k\in\N\\j+k=i-1\end{subarray}}
\zeta_{j}\zeta_{k})_{i=r+1}^{2r+1}
\end{align}
is onto and
the images of $\ul{\zeta}, \ul{\zeta}^{\prime}\in \C^{r}\times \C^{\times}$ under this map
are equal if and only if $\ul{\zeta}=\pm \ul{\zeta}^{\prime}$ since
\begin{align}
&\dfrac{1}{2}\sum_{\begin{subarray}{l}j,k\in\N\\j+k=i-1\end{subarray}}
\zeta_{j}\zeta_{k}\nonumber\\
&=\left\{
\begin{array}{ll}
\dfrac{1}{2}\zeta_{r}^2 &\mbox{if }i=2r+1,\\
\zeta_{r}\zeta_{i-1-r}+\dfrac{1}{2}\sum\limits_{j=i-1-r+1}^{r-1}\zeta_{j}\zeta_{i-1-j} &\mbox{if }r+1\leq i<2r+1,
\end{array}
\right.
\end{align}
for $i=r+1,r+2,\ldots,2r+1$.

Let $\theta$ be an automorphism of $M(1)$ of order $2$ determined by
\begin{align}
\label{eq:theta}
\theta(h(-i_1)\cdots h(-i_n)\vac)&=(-1)^{n}h(-i_1)\cdots h(-i_n)\vac.
\end{align}

Set a Lie algebra
\begin{align}
\hat{\hei}[-1]&=\hei\otimes t^{1/2}\C[t,t^{-1}]\oplus \C K
\end{align} 
with the Lie bracket relations 
\begin{align}
[K,\hat{\hei}[-1]]&=0&\mbox{and}&&
[\alpha\otimes t^{m},\beta\otimes t^{n}]&=m\langle\alpha,\beta\rangle\delta_{m+n,0}K
\end{align}
for $\alpha,\beta\in \hei$ and $m,n\in1/2+\Z$.
For $\alpha\in H$ and $n\in1/2+\Z$, $\alpha(n)$ denotes $\alpha\otimes t^{n}\in\widehat{H}$. 
Set two Lie subalgebras of $\hat{\hei}[-1]$:
\begin{align}
\widehat{H}[-1]_{{>0}}&=\bigoplus_{n\in 1/2+\N}H\otimes t^n&\mbox{ and }&&
\widehat{H}[-1]_{{<0}}&=\bigoplus_{n\in 1/2+\N}H\otimes t^{-n}.
\end{align}
Let $r$ be a positive integer.
For an $r$-tuple $\ul{\zeta}=(\zeta_{1/2},\zeta_{1/2+1},\ldots,\zeta_{r-1/2})\in\C^{r}$,
$\C u_{\ul{\zeta}}$ denotes a unique one dimensional $\widehat{H}[-1]_{{>0}}$-module 
such that 
\begin{align}
h(i)\cdot u_{\ul{\zeta}}&
=\left\{
\begin{array}{ll}
\zeta_iu_{\ul{\zeta}}&\mbox{ for }i=1/2,1/2+1,\ldots,r-1/2,\\
0&\mbox{ for }i>r-1/2
\end{array}
\right.&\mbox{and}\nonumber\\
K\cdot u_{\ul{\zeta}}&=u_{\ul{\zeta}}.
\end{align}
We take an $\widehat{H}[-1]$-module 
\begin{align}
\label{eq:twist-induced}
M(1,\ul{\zeta})(\theta)
&=\sU (\widehat{H}[-1])\otimes_{\sU (\widehat{H}[-1]_{>0})}\C u_{\ul{\zeta}}
\cong\sU (\widehat{H}[-1]_{<0})\otimes_{\C}\C u_{\ul{\zeta}}.
\end{align}
We define for $\alpha\in \hei$, 
\begin{align}
	\alpha(x)&=\sum_{i\in 1/2+\Z}\alpha(i)x^{-i-1}
	\end{align}
and for $u=\alpha_1(-\wi_1)\cdots \alpha_k(-\wi_k)\vac\in M(1)$, 
\begin{align}
	Y_{0}(u,x)&=\nor
\dfrac{1}{(\wi_1-1)!}	(\dfrac{d^{\wi_1-1}}{dx^{\wi_1-1}}\alpha_1(x))
	\cdots
\dfrac{1}{(\wi_k-1)!}	(\dfrac{d^{\wi_k-1}}{dx^{\wi_k-1}}\alpha_k(x))\nor.
\end{align}
Here, for $\beta_1,\ldots,\beta_{n}\in H$ and $i_1,\ldots,i_n\in1/2+\Z$, we define 
$\nor \beta_1(i_1)\cdots\beta_{n}(i_n)\nor$ inductively by
\begin{align}
\label{eq:nomal-ordering}
\nor \beta_1(i_1)\nor&=\beta_1(i_1)\qquad\mbox{ and}\nonumber\\
\nor \beta_1(i_1)\cdots\beta_{n}(i_n)\nor&=
\left\{
\begin{array}{ll}
\nor \beta_{2}(i_2)\cdots\beta_{n}(i_n)\nor \beta_1(i_1)&\mbox{if }i_1\geq 0,\\
\beta_{1}(i_1)\nor \beta_{2}(i_2)\cdots\beta_{n}(i_n)\nor &\mbox{if }i_1<0.
\end{array}\right.
\end{align}
We define $c_{mn}\in\Q$ for $ m,n\in \Z_{\geq 0}$ by
\begin{align}
	\sum_{m,n=0}^{\infty}c_{mn}x^{m}y^{n}&=-\log (\dfrac{(1+x)^{1/2}+(1+y)^{1/2}}{2})
	\end{align}
	and set 
	\begin{align}
		\Delta_{x}&=\sum_{m,n=0}^{\infty}c_{mn}h(m)h(n)x^{-m-n}.
		\end{align}
Then, for $u\in M(1)$ we define a vertex operator $Y_{M(1,\ul{\zeta})(\theta)}$ by
\begin{align}
Y_{M(1,\ul{\zeta})(\theta)}(u,x)&=Y_{0}(e^{\Delta_{x}}u,x).
\end{align}
The same argument as in \cite[Theorem 9.3.1]{FLM} and \cite[Section 4.2]{Li2} shows that 
$(M(1,\ul{\zeta})(\theta),Y_{M(1,\ul{\zeta})(\theta)})$ is a simple $\theta$-twisted weak $M(1)$-module.
If $r=1$ and $\lambda_{1/2}=0$, then $M(1,(0))(\theta)$ is a simple $\theta$-twisted $M(1)$-module.
Since for $i=r+1,r+2,\ldots,2r$,
\begin{align}
\label{eq:twisted-type}
\omega_{i}u_{\ul{\zeta}}&=\dfrac{1}{2}\sum_{\begin{subarray}{l}j,k\in1/2+\N\\j+k=i-1\end{subarray}}
\zeta_{j}\zeta_{k}u_{\ul{\zeta}}\in\C u_{\ul{\zeta}},
\end{align}
$u_{\ul{\zeta}}$ is a Whittaker vector of type $(\sum_{\begin{subarray}{l}j,k\in1/2+\N\\j+k=i-1\end{subarray}}\zeta_{j}\zeta_{k}/2)_{i=r+1}^{2r}$
for $\omega$.
As in the case of $M(1,\ul{\zeta})$,
the map 
\begin{align}
\label{eq:twist-zeta-lambda}
\C^{r-1}\times \C^{\times}&\rightarrow \C^{r-1}\times \C^{\times}\nonumber\\
(\zeta_{1/2},\ldots,\zeta_{r-1/2})&\mapsto (\dfrac{1}{2}\sum_{\begin{subarray}{l}j,k\in1/2+\N\\j+k=i-1\end{subarray}}
\zeta_{j}\zeta_{k})_{i=r+1}^{2r}
\end{align}
is onto and
the images of $\ul{\zeta}, \ul{\zeta}^{\prime}\in \C^{r}\times \C^{\times}$ under this map
are equal if and only if $\ul{\zeta}=\pm \ul{\zeta}^{\prime}$. 

We take the subalgebra $M(1)^{+}$ of $M(1)$ consisting of the fixed points of $\theta$:
\begin{align}
M(1)^{+}&=\{u\in M(1)\ |\ \theta(u)=u\}.
\end{align}
It is shown in \cite[Theorem 2.7 (2)]{DG} that 
$M(1)^{+}$ is generated by the Virasoro element $\omega$
and homogeneous
\begin{align}
J&=h(-1)^4\vac-2h(-3)h(-1)\vac+\dfrac{3}{2}h(-2)^2\vac\in M(1)^{+}
\end{align}
of weight $4$.
It is shown in \cite[(3.3)]{DN1} that $\omega$ and $J$ satisfies
\begin{align}
\label{eq:lie-oj}
[\omega_{i},J_{j}]&=(3i-j)J_{i+j-1},\ i,j\in\Z.
\end{align}
We also have the following commutator formula for $J_{\wi}$ and $J_{\wj}$ ($\wi,\wj\in\Z$) by using a computer algebra system Risa/Asir:
\begin{align}
\label{eq:jj-com}
&[J_{\wi},J_{\wj}]=\sum_{\wk=0}^{7}\binom{\wi}{k}(J_{k}J)_{i+j-k}\nonumber\\
&=
(-\dfrac{1392}{5}\omega_{-6}\vac-\frac{2784}{5}\omega_{-4}\omega_{-1}\vac
+120\omega_{-3}\omega_{-2}\vac+\frac{1632}{5}\omega_{-2}\omega_{-1}^2\vac\nonumber\\
&\qquad{}-\frac{56}{5}\omega_{-2}J_{-1}\vac-\frac{56}{5}\omega_{-1}J_{-2}\vac+\frac{6}{5}J_{-4}\vac)_{{\wi}+{\wj}}\nonumber\\
&\quad{}+\binom{\wi}{1}(-\frac{1856}{5}\omega_{-5}\vac-\frac{2384}{5}\omega_{-3}\omega_{-1}\vac+
\frac{1316}{5}\omega_{-2}^2\vac+\frac{1088}{5}\omega_{-1}^3\vac\nonumber\\
&\qquad{}-\frac{112}{5}\omega_{-1}J_{-1}\vac-\frac{46}{5}J_{-3}\vac)_{{\wi}+{\wj}-1}\nonumber\\
&\quad{}+\binom{\wi}{2}(-48\omega_{-4}\vac+336\omega_{-2}\omega_{-1}\vac-30J_{-2}\vac)_{{\wi}+{\wj}-2}\nonumber\\
&\quad{}+\binom{\wi}{3}(-72\omega_{-3}\vac+336\omega_{-1}^2\vac-60J_{-1}\vac)_{{\wi}+{\wj}-3}\nonumber\\
&\quad{}+216\binom{\wi}{4}(\omega_{-2}\vac)_{{\wi}+{\wj}-4}+432\binom{\wi}{5}(\omega_{-1}\vac)_{{\wi}+{\wj}-5}
+54\binom{\wi}{7}\vac_{{\wi}+{\wj}-7}.
\end{align}
For $u_{\ul{\zeta}}\in M(1,\ul{\zeta})$ (resp. $M(1,\ul{\zeta})(\theta)$), we have
\begin{align}
J_{i}u_{\ul{\zeta}}&=\sum_{\begin{subarray}{l}i_1,i_2,i_3,i_4\in\N\ (\mmbox{resp. }1/2+\N)\\i_1+i_2+i_3+i_4=i-3\end{subarray}}
\zeta_{i_1}
\zeta_{i_2}
\zeta_{i_3}
\zeta_{i_4}u_{\ul{\zeta}}\in \C u_{\ul{\zeta}}
\end{align}
for $i=3r+3,3r+4,\ldots,4r+3$ (resp. $i=3r-1,3r+4,\ldots,4r-2)$ and
$J_{i}\lw=0$ for $i>4r+3$ (resp. $4r-2)$. 

\begin{lemma}
\label{lemma:simple-hei}
For $\ul{\zeta}\in\C^{r}\times \C^{\times}$, $M(1,\ul{\zeta})$ is a simple  weak $M(1)^{+}$-module
and for $\ul{\zeta}\in\C^{r-1}\times \C^{\times}$, $M(1,\ul{\zeta})(\theta)$ is a simple weak $M(1)^{+}$-module.
In particular, for any $\ul{\lambda}\in\C^{\ws-\lfloor \ws/2\rfloor-1}\times \C^{\times}$, $\ws\geq 2$,
there exists a weak $M(1)^{+}$-module with a Whittaker vector of type $\ul{\lambda}$ for $\omega$.
\end{lemma}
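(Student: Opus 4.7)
The plan is to separate the lemma into the simplicity assertions and the ``in particular'' existence clause, treating the untwisted case in detail while the twisted case follows by the same template. For simplicity of $M(1,\ul{\zeta})$ as a weak $M(1)^{+}$-module (under $\zeta_r\ne 0$), I would show directly that (i) any nonzero weak $M(1)^{+}$-submodule $W\subset M(1,\ul{\zeta})$ contains $u_{\ul{\zeta}}$, and (ii) the $M(1)^{+}$-submodule generated by $u_{\ul{\zeta}}$ coincides with $M(1,\ul{\zeta})$. For (i), filter $M(1,\ul{\zeta})=\sU(\widehat{H}_{<0})u_{\ul{\zeta}}$ by PBW degree (the number of negative-mode $h$'s applied to $u_{\ul{\zeta}}$), pick a nonzero $w\in W$ of minimal degree, and apply modes of $h(-1)^{2}\vac\in M(1)^{+}$ at a suitably chosen index to reduce the degree: positive-mode factors $h(j)$ either annihilate (for $j>r$) or produce scalars $\zeta_{j}$ (for $0\le j\le r$), while pairings $[h(j),h(-i_{m})]=j\delta_{j,i_{m}}$ remove an $h(-i_{m})$-factor, and selecting the mode to expose the coefficient $\zeta_{r}$ guarantees a nontrivial drop in PBW degree. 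Minimality of $w$ forces the result to land in $\C u_{\ul{\zeta}}$, giving $u_{\ul{\zeta}}\in W$.

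For (ii), the base case is the computation
\[
(h(-1)^{2}\vac)_{r}\,u_{\ul{\zeta}}=2\zeta_{r}h(-1)u_{\ul{\zeta}}+c\,u_{\ul{\zeta}}\quad\text{with}\quad c=\sum_{k=0}^{r-1}\zeta_{k}\zeta_{r-1-k},
\]
which places $h(-1)u_{\ul{\zeta}}$ in $M(1)^{+}\cdot u_{\ul{\zeta}}$. Higher monomials $h(-i_{1})\cdots h(-i_{k})u_{\ul{\zeta}}$ are produced inductively by applying modes of even products $h(-j_{1})\cdots h(-j_{2\ell})\vac\in M(1)^{+}$ to lower-degree elements already in $M(1)^{+}\cdot u_{\ul{\zeta}}$, combined with the Virasoro bracket $[L(-n),h(-1)]=h(-n-1)$ with $L(-n)=\omega_{-n+1}\in M(1)^{+}$. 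The twisted case $M(1,\ul{\zeta})(\theta)$ runs on the same template with $\zeta_{r-1/2}\ne 0$ and the twisted vertex operators $Y_{M(1,\ul{\zeta})(\theta)}$. The existence clause then reduces to the surjectivity statements: given $\ul{\lambda}\in\C^{\ws-\lfloor\ws/2\rfloor-1}\times\C^{\times}$ with $\ws\ge 2$, set $\ws=2r+1$ and use \eqref{eq:untwist-zeta-lambda} if $\ws$ is odd, or $\ws=2r$ and use \eqref{eq:twist-zeta-lambda} if $\ws$ is even, to obtain $\ul{\zeta}$ realizing the type via \eqref{eq:un-type} or \eqref{eq:twisted-type}.

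The hardest step is completing the induction in (ii): the commutator $[L(-n),h(-1)]u_{\ul{\zeta}}=h(-n-1)u_{\ul{\zeta}}$ appears to hand us $h(-n-1)u_{\ul{\zeta}}$ from $h(-1)u_{\ul{\zeta}}$, but one also incurs the correction term $h(-1)L(-n)u_{\ul{\zeta}}$, and $L(-n)u_{\ul{\zeta}}=\omega_{-n+1}u_{\ul{\zeta}}$ is not a scalar---since $u_{\ul{\zeta}}$ is Whittaker rather than highest-weight---and contributes new PBW-positive components which must themselves be reproduced. The way through is a nested induction simultaneously on PBW degree and on the leading index, coupled with careful Borcherds-identity bookkeeping of the modes of $\nor h(x)\cdots h(x)\nor$ generators of $M(1)^{+}$.
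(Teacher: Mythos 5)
Your overall architecture (reduce any nonzero submodule to $u_{\ul{\zeta}}$, generate all of $M(1,\ul{\zeta})$ from $u_{\ul{\zeta}}$, run the twisted case in parallel, and deduce the existence clause from the surjectivity of the maps \eqref{eq:untwist-zeta-lambda} and \eqref{eq:twist-zeta-lambda} together with \eqref{eq:un-type} and \eqref{eq:twisted-type}) matches the paper, and the last part is fine. But the core of your simplicity argument has a genuine gap: you work essentially only with modes of $h(-1)^{2}\vac$ (i.e.\ Virasoro modes) plus an unspecified appeal to higher even products, and this is not enough. A single mode $(h(-1)^{2}\vac)_{n}=\sum_{i+j=n-1}\nor h(i)h(j)\nor$ is an uncontrolled sum over all splittings of $n-1$; when applied to a general monomial, many splittings contribute at the same PBW degree and can cancel, so your claim in step (i) that ``selecting the mode to expose the coefficient $\zeta_r$ guarantees a nontrivial drop in PBW degree'' is unjustified. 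The paper's proof turns exactly on repairing this: it uses the full two-parameter family $h(-p-1)h(-q-1)\vac\in M(1)^{+}$ and the non-singularity of the generalized Vandermonde matrix $(\binom{i-m-1}{p}\binom{j-m-1}{q})$ (Lemma \ref{lemma:determinant}) to show that, for any fixed vector $u$, \emph{each individual} quadratic $\nor h(i)h(j)\nor u$ is a linear combination of the $(h(-p-1)h(-q-1)\vac)_{n}u$, hence lies in the $M(1)^{+}$-submodule generated by $u$. Once all quadratics $h(i)h(j)$ are available as submodule-preserving operators, both the lowering and the generating steps follow from the explicit Heisenberg commutators $[h(i)^{2},h(-i)^{k}]$, $[h(i)h(j),h(-i)h(-j)^{k}]$, with $\zeta_{r}\neq 0$ entering through $h(-i)h(r)u_{\ul{\zeta}}=\zeta_{r}h(-i)u_{\ul{\zeta}}$, which connects monomials of different parities (this is where simplicity would fail for $\ul{\zeta}=(0)$, since $M(1)=M(1)^{+}\oplus M(1)^{-}$).

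Your step (ii) has a second, sharper problem: the commutator $[L(-n),h(-1)]=h(-n-1)$ treats $h(-1)$ as an operator, but $h(-1)$ is a mode of $h(-1)\vac\notin M(1)^{+}$, so it is not available to you when you are only allowed to act by modes of elements of $M(1)^{+}$; only the \emph{vector} $h(-1)u_{\ul{\zeta}}$ is at your disposal after the base-case computation, not the operator. You correctly flag the correction term $h(-1)L(-n)u_{\ul{\zeta}}$ as the hard point, but the proposed resolution (``nested induction \dots careful Borcherds-identity bookkeeping'') is not an argument, and I do not believe it closes without importing something equivalent to the quadratic-isolation step above (indeed, proving that $M(1,\ul{\zeta})$ is generated by $u_{\ul{\zeta}}$ over the Virasoro algebra alone is essentially the content of the much harder Lemma \ref{lemma:gen-omega}, which needs the relations $\sv^{(9)}$, $\sv^{(10)}$). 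To repair the proof, replace the reliance on $\omega$-modes by the family $h(-p-1)h(-q-1)\vac$ and prove (or cite) the determinant identity \eqref{eq:determinant}.
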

\begin{proof}
We only show that $M(1,\ul{\zeta})$ is simple.
The same argument shows that $M(1,\ul{\zeta})(\theta)$ is simple.
The last statement follows from \eqref{eq:un-type},\eqref{eq:untwist-zeta-lambda},
\eqref{eq:twisted-type}, and \eqref{eq:twist-zeta-lambda}.
Note that by \eqref{eq:untwist-induced},
$M(1,\ul{\zeta})$ is spanned by 
\begin{align}
h(-i_1)\cdots h(-i_{n})u_{\zeta}, n\in\N, i_1,\ldots,i_n\in\Z_{>0}.
\end{align}
For $p,q\in\N$ and $n\in\Z$, we have
\begin{align}
\label{eq:hphq}
(h(-p-1)h(-q-1)\vac)_{n+1}&=\sum_{
\begin{subarray}{l}i,j\in\Z,\\
i+j=n
\end{subarray}
}\binom{-i-1}{p}\binom{-j-1}{q}\nor h(i)h(j)\nor
\end{align}
where $\nor h(i)h(j)\nor$ is defined to be
\begin{align}
\nor h(i)h(j)\nor&=
\left\{
\begin{array}{ll}
h(i)h(j)&\mbox{if }j\geq 0,\\
h(j)h(i)&\mbox{if }j<0.
\end{array}
\right.
\end{align}
Let $u\in M(1,\ul{\zeta})$ and let $\wm\in\N$ such that $h(i)u=0$ for all $i>\wm$.
Then, \eqref{eq:hphq} can be written as
\begin{align}
&(h(-p-1)h(-q-1)\vac)_{n+1}u\nonumber\\
&=\sum_{
\begin{subarray}{l}i,j\geq 0,\\
i+j=2\wm-\wn
\end{subarray}
}\binom{i-m-1}{p}\binom{j-m-1}{q}\nor h(-i+m)h(-j+m)\nor u.
\end{align}
Let $n\in\N$ and set $S=\{(i,j)\in\N^2\ |\ i+j\leq \wn\}$.
Since the square matrix $(\binom{i-m-1}{p}\binom{j-m-1}{q})_{(i,j),(p,q)\in S}$ is non-singular by 
Lemma \ref{lemma:determinant} below,
for any pair of $i,j\in\Z$ with $i\leq j$, $h(i)h(j)u$ is a linear combination of 
$(h(-p-1)h(-q-1)\vac)_{n}u$, $p,q\in\N$, $n\in\Z$.
Since for all $i,j,k\in\Z_{>0}$ with $i\neq j$, 
\begin{align}
[h(i)^2,h(-i)^{k}]&=i^2k(k-1)h(-i)^{k-2}+2ikh(-i)^{k-1}h(i),\nonumber\\
[h(i)^2,h(-j)^{k}]&=0,\nonumber\\
[h(i)^2,h(-i)h(-j)]&=2ih(-j)h(i),
\end{align}
and
\begin{align}
&[h(i)h(j),h(-i)h(-j)^{k}]\nonumber\\
&=ijkh(-j)^{k-1}+jkh(-i)h(-j)^{k-1}h(i)+ih(-j)^{k}h(j),
\end{align}
an inductive argument shows that $M(1,\ul{\zeta})$ is simple.
\end{proof}

\begin{remark}
Lemma \ref{lemma:simple-hei} also follows from Theorem\ref{theorem:classification-module}.
\end{remark}

The following result says that the matrix in the proof of Lemma \ref{lemma:simple-hei} above is non-singular.
\begin{lemma}
\label{lemma:determinant} 
Let $n\in\N$ and let $x_0,\ldots,x_n,y_0,\ldots,y_n$ be indeterminants.
Set $S=\{(i,j)\in\N\ |\ i+j\leq n\}$ and 
a square matrix $A=(\binom{x_i}{k}\binom{y_j}{l})_{(i,j),(k,l)\in S}$ of size $(n+1)(n+2)/2$.
Then
\begin{align}
\label{eq:determinant}
\det A&=
\dfrac{
\prod\limits_{0\leq i<i^{\prime}\leq n}(x_i-x_{i^{\prime}})^{n+1-i^{\prime}}
\prod\limits_{0\leq j<j^{\prime}\leq n}(y_j-y_{j^{\prime}})^{n+1-j^{\prime}}}
{
\prod\limits_{0\leq i<i^{\prime}\leq n}(i-{i^{\prime}})^{n+1-i^{\prime}}
\prod\limits_{0\leq j<j^{\prime}\leq n}(j-{j^{\prime}})^{n+1-j^{\prime}}}.
\end{align}
\end{lemma}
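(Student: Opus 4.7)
The plan is to view both sides of \eqref{eq:determinant} as polynomials in the $2(n+1)$ indeterminates, argue by divisibility that every linear factor on the right must appear in $\det A$ with the stated multiplicity, match total degrees to conclude the quotient is a constant, and then pin that constant down by a convenient specialization.

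For the divisibility step I would fix $0\leq i<i'\leq n$ and set $x_i=x_{i'}$. Then for every $j$ with $(i,j),(i',j)\in S$---of which there are exactly $n+1-i'$, namely $j=0,1,\ldots,n-i'$---the rows of $A$ indexed by $(i,j)$ and $(i',j)$ coincide. Since $\binom{x_{i'}}{k}-\binom{x_i}{k}\in(x_{i'}-x_i)\Z[x_i,x_{i'}]$, subtracting row $(i,j)$ from row $(i',j)$ for each such $j$ extracts a factor of $x_i-x_{i'}$ from each of these $n+1-i'$ rows, so $(x_i-x_{i'})^{n+1-i'}\mid\det A$. The analogous argument in the $y$-variables gives $(y_j-y_{j'})^{n+1-j'}\mid\det A$ for $j<j'$, and since these linear polynomials are pairwise coprime in $\C[x_0,\ldots,x_n,y_0,\ldots,y_n]$, their product to the stated multiplicities divides $\det A$. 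A direct count shows that the total $x$-degree of every term in the permutation expansion of $\det A$ is $\sum_{(k,l)\in S}k=\sum_{k=0}^{n}k(n+1-k)=n(n+1)(n+2)/6$, which matches the $x$-degree $\sum_{0\leq i<i'\leq n}(n+1-i')$ of the right-hand numerator; the $y$-situation is symmetric, so the quotient is a constant $c\in\C$.

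Finally I would determine $c$ by specializing $x_i=i$ and $y_j=j$ for all $i,j$. Then $A_{(i,j),(k,l)}=\binom{i}{k}\binom{j}{l}$ vanishes unless $k\leq i$ and $l\leq j$, so listing $S$ in lexicographic order makes $A$ lower triangular with all diagonal entries $\binom{i}{i}\binom{j}{j}=1$, giving $\det A=1$; under the same specialization the numerator in \eqref{eq:determinant} coincides with its denominator, so the right-hand side also equals $1$ and hence $c=1$. The only delicate point is carefully tracking the row-subtraction bookkeeping in the divisibility step; after that both the degree count and the specialization are immediate.
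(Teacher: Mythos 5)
Your proof is correct and follows essentially the same route as the paper: divisibility by each $(x_i-x_{i'})^{n+1-i'}$ and $(y_j-y_{j'})^{n+1-j'}$ via the coincident rows indexed by $(i,j)$ and $(i',j)$ for $j=0,\ldots,n-i'$, a degree comparison to see the quotient is constant, and the specialization $x_i=i$, $y_j=j$ making $A$ triangular with unit diagonal (upper vs.\ lower is just a choice of ordering); in fact your row-subtraction justification of the multiplicity is spelled out in more detail than in the paper. The only nit is that $\binom{x_{i'}}{k}-\binom{x_i}{k}$ lies in $(x_{i'}-x_i)\Q[x_i,x_{i'}]$ rather than $(x_{i'}-x_i)\Z[x_i,x_{i'}]$, which changes nothing.
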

\begin{proof}
Let $i,i^{\prime}\in\N$ with $i<i^{\prime}\leq n$.
Since $j=0,1,\ldots,n-i^{\prime}$ satisfies $0\leq i+j\leq n$ and $0\leq i^{\prime}+j\leq n$,
$(x_i-x_{i^{\prime}})^{n+1-i^{\prime}}$ is a factor of $\det A$.
By the same reason, $(y_j-x_{j^{\prime}})^{n+1-j^{\prime}}$ is a factor of $\det A$
for any pair of $j,j^{\prime}\in\N$ with $j<j^{\prime}\leq n$. 
Thus, $\det A$ is divisible by
\begin{align}
\label{eq:factor}
\prod_{0\leq i<i^{\prime}\leq n}(x_i-x_{i^{\prime}})^{n+1-i^{\prime}}
\prod_{0\leq j<j^{\prime}\leq n}(y_j-y_{j^{\prime}})^{n+1-j^{\prime}}.
\end{align}
Since the degrees of $\det A$ and \eqref{eq:factor} are equal to $n(n+1)(n+2)/3$,
 $\det A$ is a scalar multiple of \eqref{eq:factor}.
Since the matrix obtained by substituting $x_i=i$ and $y_i=i$, $i=0,1,\ldots n$, in $A$ 
is an upper triangular matrix with all diagonal elements $1$,
we have \eqref{eq:determinant}.
\end{proof}
For $i\in\Z$, let $M(1)^{+}_i$ be the subspace of $M(1)^{+}$ consisting of the elements with weight $i$.
Since the dimensions of $M(1)^{+}_i$ are obtained by \cite[Theorem 2.7]{DG},
we know there are relations for $\omega$ and $J$ in $M(1)^{+}_9$ and $M(1)^{+}_{10}$.
We have the following two relations $P^{(9)}$ of weight $9$ and $P^{(10)}$ of weight $10$ 
 for $\omega$ and $J$ in $M(1)^{+}$ by using a computer algebra system Risa/Asir.
\begin{lemma}
The following two elements of $M(1)^{+}$ are zero:
\begin{align*}
\sv^{(9)}&=30J_{-6}\vac-30\omega_{-1}J_{-4}\vac+27\omega_{-2}J_{-3}\vac-39\omega_{-3}J_{-2}\vac\\
&\quad{}+16\omega_{-1}^2J_{-2}\vac+52\omega_{-4}J_{-1}\vac-32\omega_{-2}\omega_{-1}J_{-1}\vac&\mbox{and}\\
\sv^{(10)}&=\dfrac{8192}{525}\omega_{-1}^5\vac-\dfrac{2048}{525}\omega_{-1}^3J_{-1}\vac \\
&\quad{}+J_{-2}^2\vac-\dfrac{13856}{105}\omega_{-2}^{2}\omega_{-1}^2\vac-\dfrac{22528}{105}\omega_{-3}\omega_{-1}^3\vac\\&
\quad{}-\dfrac{45624}{175}\omega_{-3}\omega_{-2}^2\vac
-\dfrac{2304}{175} \omega_{-3}^2\omega_{-1}\vac
-\dfrac{134224}{525}\omega_{-4}\omega_{-2}\omega_{-1}\vac \\&
\quad{}-\dfrac{60848}{525}\omega_{-4}^2\vac 
-\dfrac{2176}{75}\omega_{-5}\omega_{-1}^2\vac 
-\dfrac{576}{175}\omega_{-5}\omega_{-3}\vac \\&
\quad{}+\dfrac{117664}{175}\omega_{-6}\omega_{-2}\vac 
+\dfrac{436416}{175}\omega_{-7}\omega_{-1}\vac 
+\dfrac{252832}{175}\omega_{-9}\vac \\&
\quad{}+\dfrac{24184}{1575}\omega_{-2}^2J_{-1}\vac
+\dfrac{65024}{1575}\omega_{-3}\omega_{-1}J_{-1}\vac 
-\dfrac{150176}{1575}\omega_{-5}J_{-1}\vac \\&
\quad{}+\dfrac{152}{525}\omega_{-2}\omega_{-1}J_{-2}\vac
+\dfrac{17102}{1575} \omega_{-4}J_{-2}\vac
+\dfrac{1024}{315}\omega_{-1}^2J_{-3}\vac \\&
\quad{}+\dfrac{2544}{175}\omega_{-3}J_{-3}\vac 
+\dfrac{382}{525}\omega_{-2}J_{-4}\vac 
-\dfrac{1088}{525}\omega_{-1}J_{-5}\vac. 
\end{align*}
\end{lemma}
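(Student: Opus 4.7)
The plan is to verify both $\sv^{(9)} = 0$ and $\sv^{(10)} = 0$ by direct expansion inside the polynomial realization $M(1) \cong \sU(\widehat{H}_{<0})\vac$, where the PBW monomials $h(-i_1)\cdots h(-i_n)\vac$ with $1 \le i_1 \le \cdots \le i_n$ form a basis. Under this identification each homogeneous piece $M(1)^{+}_{k}$ is finite-dimensional, and every element of $M(1)^{+}$ becomes a concrete polynomial in the variables $h(-1), h(-2), h(-3), \ldots$ applied to $\vac$. The identities $\sv^{(9)} = 0$ and $\sv^{(10)} = 0$ are then equivalent to explicit polynomial identities in finitely many such variables.

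The first step is to rewrite each mode $\omega_{-k}$ and $J_{-k}$ as a normal-ordered polynomial in the Heisenberg modes $h(i)$, $i \in \Z$. Starting from $\omega = h(-1)^2\vac/2$ and $J = h(-1)^4\vac - 2h(-3)h(-1)\vac + (3/2)h(-2)^2\vac$, the mode $\omega_n$ equals $(1/2)\sum_{i+j=n-1}\nor h(i)h(j)\nor$ by \eqref{eq:hphq}, and an analogous but longer expression expands each $J_n$ into a sum of normal-ordered quartics in the $h(i)$'s. Given these formulas, for each monomial $\omega_{-i_1}\cdots\omega_{-i_p}J_{-j_1}\cdots J_{-j_q}\vac$ of weight $9$ or $10$ appearing in $\sv^{(9)}$ or $\sv^{(10)}$, one applies the operators recursively to $\vac$, reorders into PBW form, and obtains a polynomial in $h(-1), \ldots, h(-9)$ or $h(-1), \ldots, h(-10)$.

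The final step is to assemble the expansions of every term in $\sv^{(9)}$ and $\sv^{(10)}$, sum with the stated rational coefficients, and confirm that all PBW coefficients vanish. The hard part is not any individual step but the combinatorial size of the computation: in weight $10$ a product of up to five $\omega$- or $J$-modes applied to $\vac$ produces many PBW monomials, and the cancellations among the rational coefficients of $\sv^{(10)}$ are intricate. For this reason we delegate the verification to the computer algebra system Risa/Asir; a by-hand calculation is in principle routine but practically prohibitive. As a consistency check, the dimensions $\dim M(1)^{+}_{9}$ and $\dim M(1)^{+}_{10}$ can be read off from the character formula in \cite[Theorem 2.7]{DG}. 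Comparing these dimensions with the number of monomials in $\omega$ and $J$ of the respective weights shows that at least one relation of each stated type must exist, so the computation only needs to pin down the precise coefficients in $\sv^{(9)}$ and $\sv^{(10)}$ and then verify that the resulting polynomial in the $h(-n)$'s is identically zero.
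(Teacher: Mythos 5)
Your proposal is correct and matches the paper's approach: the paper gives no written proof, simply noting that the dimensions of $M(1)^{+}_{9}$ and $M(1)^{+}_{10}$ from \cite[Theorem 2.7]{DG} guarantee that such relations exist and that the explicit coefficients were found and verified with Risa/Asir. Your description of expanding each $\omega_{-k}$ and $J_{-k}$ into normal-ordered Heisenberg modes and checking cancellation in the PBW basis is exactly the computation the paper delegates to the computer.
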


\section{\label{section:main}Weak $M(1)^{+}$-modules with Whittaker vectors.}
In this section, we will show Theorem \ref{theorem:classification-module}.
Thus, any simple $M(1)^{+}$-module with at least one 
Whittaker vector for $\omega$ is isomorphic to one of the simple $M(1)^{+}$-modules listed in
Lemma \ref{lemma:simple-hei}.
For a weak $V$-module $M$ and $\lw\in M$, 
$\langle \omega\rangle\lw$ denotes the set of linear span of the following elements:
\begin{align}
\omega_{i_1}\cdots \omega_{i_n}w\ (n\in\N, i_1,\ldots,i_n\in\Z).
\end{align}

By \eqref{eq:lie-oj}, an inductive argument shows the following result.
\begin{lemma}
\label{lemma:J-decrease}
Let $M$ be a weak $M(1)^{+}$-module and $\lw\in M$.
Let $\wpp\in\N, \ws_1,\ldots,\ws_{\wpp}\in \Z_{>0}, j\in\Z$, and $\wi_1,\ldots,\wi_{\wpp}\in\N$
such that $\wi_k\leq \ws_k$ for all $k=1,\ldots,\wpp$.
Set
$e=\sum_{\wk=1}^{\wpp}(\ws_{\wk}-i_{\wk})$. Then
\begin{align}
J_{j+e}\omega_{i_p}\cdots\omega_{i_1}\lw&\in \sum_{i\geq j}\langle\omega \rangle J_{i}\lw.
\end{align}
\end{lemma}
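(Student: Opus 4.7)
The plan is to induct on $\wpp$, the number of $\omega$ factors standing between $J_{j+e}$ and $\lw$. The base case $\wpp=0$ forces $e=0$, and then the assertion $J_{j}\lw\in\langle\omega\rangle J_{j}\lw$ is immediate.

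For the inductive step, set $e'=\sum_{\wk=1}^{\wpp-1}(\ws_{\wk}-\wi_{\wk})$, so $e=(\ws_{\wpp}-\wi_{\wpp})+e'$. Using \eqref{eq:lie-oj} to commute $J_{j+e}$ past $\omega_{\wi_{\wpp}}$,
\begin{align*}
J_{j+e}\omega_{\wi_{\wpp}}\cdots\omega_{\wi_1}\lw
&=\omega_{\wi_{\wpp}}J_{j+e}\omega_{\wi_{\wpp-1}}\cdots\omega_{\wi_1}\lw\\
&\quad{}-(3\wi_{\wpp}-j-e)J_{j+e+\wi_{\wpp}-1}\omega_{\wi_{\wpp-1}}\cdots\omega_{\wi_1}\lw,
\end{align*}
and in both resulting terms there is one fewer $\omega$ on the right of the leading $J$-mode, which is exactly what the induction requires.

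For the first term, rewrite $j+e=(j+\ws_{\wpp}-\wi_{\wpp})+e'$ and apply the inductive hypothesis to the sequences $(\ws_1,\ldots,\ws_{\wpp-1})$, $(\wi_1,\ldots,\wi_{\wpp-1})$ with new base $j':=j+\ws_{\wpp}-\wi_{\wpp}$. Since $\wi_{\wpp}\leq\ws_{\wpp}$ forces $j'\geq j$, the inclusion $J_{j+e}\omega_{\wi_{\wpp-1}}\cdots\omega_{\wi_1}\lw\in\sum_{i\geq j'}\langle\omega\rangle J_{i}\lw\subseteq\sum_{i\geq j}\langle\omega\rangle J_{i}\lw$ is preserved under the outer left-multiplication by $\omega_{\wi_{\wpp}}$. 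For the second term, rewrite $j+e+\wi_{\wpp}-1=(j+\ws_{\wpp}-1)+e'$ and apply the inductive hypothesis with new base $j'':=j+\ws_{\wpp}-1$; the condition $\ws_{\wpp}\in\Z_{>0}$ gives $j''\geq j$, completing the induction.

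Nothing deeper than the commutator relation \eqref{eq:lie-oj} is needed; the only potential pitfall is the index bookkeeping required to verify that both residual bases $j'$ and $j''$ remain at least $j$, and that check uses exactly the two standing hypotheses $\wi_{\wk}\leq\ws_{\wk}$ and $\ws_{\wk}\geq 1$ built into the statement.
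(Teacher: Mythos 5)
Your proof is correct and is exactly the induction the paper has in mind: the paper proves this lemma only by the remark ``By \eqref{eq:lie-oj}, an inductive argument shows the following result,'' and your argument supplies precisely that induction, with the two base shifts $j'=j+\ws_{\wpp}-\wi_{\wpp}\geq j$ and $j''=j+\ws_{\wpp}-1\geq j$ correctly accounting for the hypotheses $\wi_{\wk}\leq\ws_{\wk}$ and $\ws_{\wk}\geq 1$.
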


The following result will be used to compute $P^{(9)}_{i}\lw$ and $P^{(10)}_{i}\lw, i\in\Z$, for a Whittaker vector $\lw$ in a weak $M(1)^{+}$-module
in the proof of Lemma \ref{lemma:gen-omega}.
\begin{lemma}
\label{lemma:top}
Let $\ws, \wot\in\Z$ with $2\leq \wot\leq \ws$.
Let $M$ be a weak $M(1)^{+}$-module and $\lw\in M$ such that
$\omega_{i}\lw=0$ for all $i>\ws$.
\begin{enumerate}
\item
Let
$p,q,i_1,\ldots,i_{p},i_{p+1},\ldots,i_{q}\in\Z$
such that $0\leq p\leq q$, $i_1,\ldots,i_{p}\geq 0,i_{p+1},\ldots,i_{q}<0$, and
	$\ws \wq\leq \wi_1+\cdots+\wi_{q}$. If $p<q$ or 
there exists $l\in\{1,\ldots,p\}$ such that $i_{l}\neq \ws$, then
	\begin{align}
\label{eq:oo}
		\omega_{i_q}\cdots \omega_{i_{p+1}}\omega_{i_p}\cdots \omega_{i_1}\lw&=0.
	\end{align}
	\item
	Let
 $p,q,i_1,\ldots,i_{p},i_{p+1},\ldots,i_{q},n\in\Z$
such that $0\leq p\leq q$, $i_1,\ldots,i_{p}\geq 0,i_{p+1},\ldots,i_{q}<0$ and
set 
\begin{align}
\label{eq:def-j}
\wj&=\wn-\wi_1-\cdots-\wi_{q}+\ws (q-1)+\wot.
\end{align}
If $p<q$ or 
there exists $l\in\{1,\ldots,p\}$ such that $i_{l}\not\in\{\wot,\wot+1,\ldots \ws\}$, then
	\begin{align}
\label{eq:ojo}
		\omega_{i_q}\cdots \omega_{i_{p+1}}J_{\wj}\omega_{i_p}\cdots \omega_{i_1}\lw&\in \sum_{i\geq n+1}\langle\omega \rangle J_{i}\lw.
	\end{align}
\end{enumerate}
\end{lemma}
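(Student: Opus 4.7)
The plan is to prove both parts by induction on $q$, combining the Virasoro commutator, the commutator relation \eqref{eq:lie-oj}, and Lemma~\ref{lemma:J-decrease}. The key observation is that whenever one commutes $\omega_{i_l}$ with $i_l>s$ past $\omega_{i_k}$ with $i_k\geq 0$, the Virasoro central cocycle vanishes (since $i_l+i_k\geq s+1\geq 3\neq 2$) and the commutator reduces to $[\omega_{i_l},\omega_{i_k}]=(i_l-i_k)\omega_{i_l+i_k-1}$, producing again a positive-indexed operator.

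For (1), after disposing of $q\leq 1$ directly, a pigeonhole argument using the sum hypothesis $\sum_k i_k\geq sq$ shows that some $l\in\{1,\dots,p\}$ satisfies $i_l>s$: when $p<q$, the positive-index sum alone is $\geq sp+(s+1)(q-p)$; when $p=q$ but not all $i_k=s$, the average forces some index above $s$. I then commute $\omega_{i_l}$ rightward past $\omega_{i_{l-1}},\dots,\omega_{i_1}$, noting that the straightened term annihilates $w$ and each commutator term has $q-1$ operators of total index $\geq sq-1\geq s(q-1)$. The only case in which the inductive hypothesis for $q-1$ could fail, namely $(i_l,i_k)=(s+1,0)$ with every other $i_j=s$ and $p=q$, would force total sum $sp-s+1<sp$, contradicting the sum hypothesis, so the induction closes cleanly.

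For (2), the inductive step splits into three cases. (a) If $p<q$ and all $i_k\leq s$ for $k\leq p$, apply Lemma~\ref{lemma:J-decrease} with $s_k=s$ to $J_j\omega_{i_p}\cdots\omega_{i_1}w$; the resulting lower bound $j'=n-\sum_{k=p+1}^q i_k+s(q-1-p)+t$ is $\geq n+1$ by $-\sum_{k=p+1}^q i_k\geq q-p$ and $t\geq 2$, and prefixing by $\omega_{i_q}\cdots\omega_{i_{p+1}}$ preserves the containment. (b) If $p=q$ and some $i_l\in\{0,\dots,t-1\}$, apply Lemma~\ref{lemma:J-decrease} with $s_l=\max(i_l,1)$ and $s_k=s$ elsewhere; a direct calculation gives $j'=n+t-i_l\geq n+1$ (or $n+t-1\geq n+1$ when $i_l=0$). (c) If some $i_l>s$, commute $\omega_{i_l}$ rightward exactly as in part (1); each commutator term either satisfies the inductive hypothesis for (2) at $q-1$, or else falls into the residual sub-case $p=q$, $(i_l,i_k)=(s+1,0)$, all remaining $i_j\in\{t,\dots,s\}$. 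In that last sub-case the expression has the form $J_j\omega_{j_{q-1}}\cdots\omega_{j_1}w$ with every $j_k\leq s$, so Lemma~\ref{lemma:J-decrease} with $s_k=s$ applies once more and yields the new lower bound $j_0=n+t-1\geq n+1$.

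The main obstacle is precisely this last sub-case of~(c): commuting $\omega_{s+1}$ with $\omega_0$ produces the perfectly valid operator $\omega_s$ and thereby destroys the invariant ``some positive index lies outside $\{t,\dots,s\}$,'' so the inductive statement of~(2) cannot absorb it and one must peel it off and dispatch it separately via Lemma~\ref{lemma:J-decrease}. The hypothesis $t\geq 2$ is exactly what makes $j_0\geq n+1$ in that branch, and is the reason the lemma is stated with $\wot\geq 2$.
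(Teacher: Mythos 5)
Your argument is correct and follows essentially the same strategy as the paper's proof: commute any $\omega_{i_l}$ with $i_l>\ws$ toward $\lw$ (where it annihilates, the central term vanishing and the merged index $i_l+i_k-1$ staying non-negative), and dispatch the remaining configurations with all positive indices at most $\ws$ via Lemma~\ref{lemma:J-decrease}, where the slack $\wot\geq 2$ together with the negative indices yields the bound $n+1$. The only differences are cosmetic — you induct on $q$ where the paper inducts on $p$ (the merging step decreases both), and the paper absorbs your ``residual sub-case'' of (c) by applying Lemma~\ref{lemma:J-decrease} uniformly whenever all post-merge indices lie in $[0,\ws]$ rather than isolating the shape $(i_l,i_k)=(\ws+1,0)$.
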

\begin{proof}
We will show (2) by induction on $p$. The same argument shows (1).
If $p=0$ and $0=p<q$, then $\wi_1,\ldots,\wi_q<0$ and hence
$\wj>\wn$ by \eqref{eq:def-j}. Thus, $\eqref{eq:ojo}$ holds.
Let $p=1$. If $\wi_1>\ws$, then $\eqref{eq:ojo}$ clearly holds.
Assume $\wi_1\leq \ws$ and $1=p<q$.
Then, $\wi_2,\ldots,\wi_q<0$ and therefore
\begin{align}
\label{eq:wigeq}
\wj&=(\ws(q-1)-\wi_1)+\wot+\wn-\wi_2-\cdots-\wi_{q}>\wn&\mbox{and}\nonumber\\
\wi_1+\wj-1&=\ws(q-1)+\wot+\wn-\wi_2-\cdots-\wi_{q}-1>\wn.
\end{align}
By \eqref{eq:lie-oj}, we have
\begin{align}
\omega_{i_q}\cdots \omega_{i_{2}}J_{\wj}\omega_{i_1}\lw&=
\omega_{i_q}\cdots \omega_{i_{2}}((-3\wi_1+\wj)J_{\wi_1+\wj-1}+\omega_{i_1}J_{\wj})\lw\nonumber\\
&\in \sum_{i\geq n+1}\langle\omega \rangle J_{i}\lw.
\end{align}
Suppose that $p=q=1$ and $\wi_1<t$. Then $\wj= \wot+\wn-\wi_1>\wn$ and
$\wi_1+\wj-1=\wot+\wn-1>\wn$. The same argument as above shows that $\eqref{eq:ojo}$ holds.

Let $p>1$. Suppose that there exists $l\in\{1,\ldots,p\}$ such that $i_{l}>\ws$.
Since
\begin{align}
\label{eq:induction-omega}
&\omega_{i_{l}}\cdots \omega_{i_1}\lw\nonumber\\
&=\sum_{k=1}^{l-1}\omega_{i_{l-1}}\cdots \omega_{i_{k+1}}[\omega_{i_{l}},\omega_{i_{k}}]
\omega_{i_{k-1}}\cdots\omega_{i_{1}}\lw+\omega_{i_{p-1}}\cdots \omega_{i_1}\omega_{i_{l}}\lw\nonumber\\
&=\sum_{k=1}^{l-1}(i_{l}-i_{k})\omega_{i_{l-1}}\cdots \omega_{i_{k+1}}\omega_{i_{l}+i_{k}-1}
\omega_{i_{k-1}}\cdots\omega_{i_{1}}\lw,
\end{align}
it is sufficient to show that
\begin{align}
\label{eq:ojo-proof}
&\omega_{i_q}\cdots \omega_{i_{p+1}}J_{\wj}\omega_{i_p}\cdots \omega_{i_{l+1}}\omega_{i_{l-1}}\cdots
\omega_{i_{k+1}}\omega_{i_{l}+i_{k}-1}\omega_{i_{k-1}}\cdots\omega_{i_{1}}\lw\nonumber\\
&\in \sum_{i\geq n+1}\langle\omega \rangle J_{i}\lw
\end{align}
for all $k\in\{1,2,\ldots,l\}$.
Let $k\in\{1,2,\ldots,l\}$. Since
\begin{align}
\label{eq:il}
i_{l}+i_{k}-1> \ws+i_{k}-1>i_{k}\geq 0
\end{align}
and
\begin{align}
\wj&=(\wn+\ws-1)-\sum_{
\begin{subarray}{l}1\leq m\leq\wq\\
m\neq k, l
\end{subarray}
}-i_{m}-(i_{l}+i_{k}-1)+\ws(q-2)+t,
\end{align}
if one of $i_1,\ldots,i_{k-1},i_{l}+i_{k}-1,i_{k+1},\ldots,i_{p}$
is greater than $\ws$, then the left-hand side of $\eqref{eq:ojo-proof}$ is an element of
$\sum_{i\geq n+\ws}\langle\omega \rangle J_{i}\lw\subset \sum_{i\geq n+1}\langle\omega \rangle J_{i}\lw$ by the induction hypothesis.
Suppose that $i_1,\ldots,i_{k-1},i_{l}+i_{k}-1,i_{k+1},\ldots,i_{p}$ are all at most $\ws$.
Since $j$ can be written as 
\begin{align}
\wj&=(\wn+1)+\sum_{
\begin{subarray}{l}1\leq m\leq\wpp\\
m\neq k, l
\end{subarray}
}(\ws-i_{m})+(\ws-(i_{l}+i_{k}-1))\nonumber\\
&\quad{}-\sum_{m=p+1}^{q}i_{m}+s(q-p)+t-2,
\end{align}
$\eqref{eq:ojo-proof}$ holds by Lemma \ref{lemma:J-decrease}.

Suppose that $i_1,\ldots,i_{p}\leq \ws$. If $p<q$, then since
\begin{align}
\label{eq:case-1}
\wj&=(\wn+1)+\sum_{k=1}^{\wpp}(s-i_k)-\sum_{k=p+1}^{q}i_{k}+s(q-p-1)+t-1,
\end{align}
$\eqref{eq:ojo}$ holds by Lemma \ref{lemma:J-decrease}.
If $p=q$ and there exists $\wl\in\{1,\ldots,\wpp\}$ such that $\wi_{\wl}<\wot$, then since
\begin{align}
\label{eq:case-2}
\wj&=(\wn+1)+\sum_{1\leq k\leq \wpp,k\neq l}(s-i_k)+(t-1-\wi_{l}),
\end{align}
$\eqref{eq:ojo}$ holds by Lemma \ref{lemma:J-decrease}.
\end{proof}

For $\wpp_1,\wpp_2,\wpp_3\in\Z_{>0}$ and $n\in\Z$, by using \cite[(3.8.9)]{LL} we have
\begin{align}
\label{eq:oj}
(\omega_{-\wpp_1}J_{-\wpp_2}\vac)_{\wn}
&=\sum_{
\begin{subarray}{l}\wi<0,\wj\in\Z\\
\wi+\wj=\wn+1-\wpp_1-\wpp_2\end{subarray}}
\binom{-i-1}{\wpp_1-1}
\binom{-\wj-1}{\wpp_2-1}
\omega_{i}J_{\wj} \nonumber\\
&\quad{}+\sum_{\begin{subarray}{l}\wi\geq 0,\wj\in\Z\\
i+\wj=\wn+1-\wpp_1-\wpp_2\end{subarray}}\binom{-i-1}{\wpp_1-1}\binom{-\wj-1}{\wpp_2-1}
J_{\wj}\omega_{\wi}
\end{align}
and
\begin{align}
\label{eq:ooj}
&(\omega_{-\wpp_1}\omega_{-\wpp_2}J_{-\wpp_3}\vac)_{n}\nonumber\\
&=\sum_{
\begin{subarray}{l}i_1<0,i_2<0,\wj\in\Z\\
i_1+i_2+\wj=n+1-\wpp_1-\wpp_2-\wpp_3\end{subarray}}
\binom{-i_1-1}{\wpp_1-1}
\binom{-i_2-1}{\wpp_2-1}
\binom{-\wj-1}{\wpp_3-1}
\omega_{i_1}\omega_{i_2}J_{\wj} \nonumber\\
&\quad{}+
\sum_{\begin{subarray}{l}i_1<0,i_2\geq 0,\wj\in\Z
\\i_1+i_2+\wj=n+1-\wpp_1-\wpp_2-\wpp_3\end{subarray}}\binom{-i_1-1}{\wpp_1-1}
\binom{-i_2-1}{\wpp_2-1}
\binom{-\wj-1}{\wpp_3-1}
\omega_{i_1}J_{\wj}\omega_{i_2}\nonumber\\
&\quad{}+
\sum_{\begin{subarray}{l}i_1\geq 0,i_2<0,\wj\in\Z\\i_1+i_2+\wj=n+1-\wpp_1-\wpp_2-\wpp_3\end{subarray}}
\binom{-i_1-1}{\wpp_1-1}\binom{-i_2-1}{\wpp_2-1}
\binom{-\wj-1}{\wpp_3-1}
\omega_{i_2}J_{\wj}\omega_{i_1} \nonumber\\
&\quad{}+\sum_{\begin{subarray}{l}i_1\geq 0,i_2\geq 0,\wj\in\Z
\\i_1+i_2+\wj=n+1-\wpp_1-\wpp_2-\wpp_3\end{subarray}}\binom{-i_1-1}{\wpp_1-1}\binom{-i_2-1}{\wpp_2-1}
\binom{-\wj-1}{\wpp_3-1}
J_{\wj}\omega_{i_2}\omega_{i_1}.
\end{align}

The following  is a key result to show Theorem \ref{theorem:classification-module}.
\begin{lemma}
\label{lemma:gen-omega}
Let $\ws, \wot\in\Z$ such that $2\leq \wot\leq \ws$
and let $\ul{\lambda}=(\lambda_{t},\ldots,\lambda_{\ws})\in \C^{\ws-\wot+1}$
with $\lambda_{\ws}\neq 0$.
Let $M$ be a weak $M(1)^{+}$-module and $\lw\in M$ such that
$\omega_{i}\lw=\lambda_{i}\lw$ 
for $i=\wot,\ldots,\ws$ and 
$\omega_{i}\lw=0$ for all $i>\ws$. Then 
for an arbitrary $i\in\Z$, $J_{i}\lw$ is an element of $\langle \omega\rangle \lw$ and therefore
the submodule of $M$ generated by $\lw$ is equal to $\langle \omega\rangle \lw$.
Moreover for $i\in\Z$, we have
\begin{align}
\label{eq:Jw}
J_{i}\lw&=\left\{
\begin{array}{ll}
4\lambda_{\ws}^2\lw,& \mbox{if }\wi=2\ws+1,\\
0,&\mbox{if }\wi>2\ws+1,\mbox{ and }\\
\mu_i\lw & \mbox{if }\ws+\wot+1\leq \wi\leq 2\ws+1
\end{array}\right.
\end{align}
where $\mu_i$ is a polynomial in $\lambda_{i-\ws-1},\ldots,\lambda_{\ws}$ for each $i=\ws+\wot+1,\ws+\wot+2,\ldots,2\ws+1$.
\end{lemma}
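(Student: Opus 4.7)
The plan is to exploit the two relations $\sv^{(9)}=0$ and $\sv^{(10)}=0$ established in the preceding lemma. Since $M$ is a weak $M(1)^{+}$-module, these give operator identities $Y_M(\sv^{(9)},x)=0=Y_M(\sv^{(10)},x)$, so every mode $(\sv^{(9)})_n$ and $(\sv^{(10)})_n$ annihilates $\lw$. Expanding $(\sv^{(9)})_n\lw=0$ through \eqref{eq:oj} and $(\sv^{(10)})_n\lw=0$ through \eqref{eq:ooj} produces $\C$-linear combinations of sandwiched terms $\omega_{i_1}\cdots\omega_{i_k} J_{j}\omega_{i'_1}\cdots\omega_{i'_l}\lw$ with one $J$-factor (from $\sv^{(9)}$) and of analogous two-$J$ terms from the $J_{-2}^2\vac$ summand of $\sv^{(10)}$. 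The Whittaker data $\omega_i\lw=\lambda_i\lw$ for $t\leq i\leq s$, together with $\omega_i\lw=0$ for $i>s$ and Lemma \ref{lemma:top}, collapse the bulk of these terms into $\sum_{i\geq n+1}\langle\omega\rangle J_i\lw$, leaving a triangular recursion in the $J$-index.

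First I would implement the recursion to obtain all of the explicit claims. The leading summand $30 J_{-6}\vac$ of $\sv^{(9)}$ contributes $30 J_{n-5}\lw$ to $(\sv^{(9)})_n\lw$, while the other summands, after expansion by \eqref{eq:oj} and reduction through Lemma \ref{lemma:top}, involve only $J_{j}\lw$ with $j>n-5$ up to $\omega$-prefactors (and commutator corrections via \eqref{eq:lie-oj}). Taking $n$ large enough that every surviving non-$J$-portion requires some $\omega_i\lw$ with $i>s$, the relation forces $J_{n-5}\lw=0$, so $J_i\lw=0$ for $i>2s+1$. The first non-trivial $n$ then isolates $J_{2s+1}\lw=4\lambda_s^2\lw$ as the unique surviving term (which also matches the value on $M(1,\ul{\zeta})$ coming from \eqref{eq:un-type}). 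Iterating downward, the right-hand sides become polynomial expressions in the eigenvalues $\lambda_{i-s-1},\ldots,\lambda_s$, yielding $J_i\lw=\mu_i\lw$ for $s+t+1\leq i\leq 2s+1$.

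Next I would continue the recursion past $i=s+t+1$ to establish $J_i\lw\in\langle\omega\rangle\lw$ for every $i\in\Z$. For such small $i$, the $\omega_k\lw$ with $0\leq k<t$ are no longer scalar multiples of $\lw$, but they are manifestly elements of $\langle\omega\rangle\lw$, and the relation $\sv^{(10)}$ lets me eliminate the product $J_{j_1}J_{j_2}\lw$ arising from the $J_{-2}^2\vac$ term (using \eqref{eq:jj-com} and the already-proved fact that large-index $J$'s act on $\lw$ through $\langle\omega\rangle$). Once $J_i\lw\in\langle\omega\rangle\lw$ is known for every $i$, an induction pushing $J$-modes past strings of $\omega$-modes via \eqref{eq:lie-oj} shows that $J_{j_1}\cdots J_{j_k}v\in\langle\omega\rangle\lw$ for every $v\in\langle\omega\rangle\lw$; since $M(1)^{+}$ is generated by $\omega$ and $J$ as a vertex operator algebra by \cite[Theorem 2.7 (2)]{DG}, the submodule $M(1)^{+}\cdot\lw$ coincides with $\langle\omega\rangle\lw$.

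The main obstacle will be verifying that the coefficient of $J_{n-5}\lw$ in the expanded relation $(\sv^{(9)})_n\lw=0$ remains non-zero once all commutator corrections from \eqref{eq:lie-oj} have been gathered: without that non-vanishing the recursion cannot be solved at all. A second, more bookkeeping-heavy task is to check that when $i\geq s+t+1$ the indices of the $\omega$-modes surviving in the recursion lie in $\{t,\ldots,s\}$, so that they contribute the eigenvalues $\lambda_t,\ldots,\lambda_s$ and $\mu_i$ is polynomial in precisely $\lambda_{i-s-1},\ldots,\lambda_s$ and not in lower-indexed $\lambda_j$'s. Both are in principle mechanical consequences of \eqref{eq:oj}, \eqref{eq:ooj}, and Lemma \ref{lemma:top}, but they demand careful index tracking.
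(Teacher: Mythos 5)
Your overall strategy --- expand the modes of $\sv^{(9)}$ and $\sv^{(10)}$ on $\lw$ via \eqref{eq:oj} and \eqref{eq:ooj}, reduce with Lemma \ref{lemma:top}, and solve a triangular recursion in the $J$-index --- is the same as the paper's. But the execution has a genuine gap at the decisive step. You identify $30J_{-6}\vac$ as the ``leading summand'' producing the lowest $J$-index $J_{n-5}\lw$; this is backwards. At mode $\wn+2\ws+3$ the term $(J_{-6}\vac)_{\wn+2\ws+3}$ is a multiple of $J_{\wn+2\ws-2}$, whose index exceeds $\wn$ since $\ws\geq 2$; the lowest $J$-index after reduction comes from the summands with the most $\omega$-factors, namely $16\omega_{-1}^2J_{-2}\vac-32\omega_{-2}\omega_{-1}J_{-1}\vac$, whose two $\omega_{\ws}$'s absorb $2\ws$ of the mode index and leave the contribution $16(-\wn+2\ws+1)\lambda_{\ws}^2J_{\wn}\lw$ modulo $\sum_{j\geq \wn+1}\langle\omega\rangle J_j\lw$, as in \eqref{eq:oj-induction-4}--\eqref{eq:J-ind}. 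The coefficient $(-\wn+2\ws+1)\lambda_{\ws}^2$ --- the very non-vanishing you flag as ``the main obstacle'' --- vanishes exactly at $\wn=2\ws+1$, which is the one index that matters. Your downward induction does give $J_{\wn}\lw=0$ for $\wn>2\ws+1$ and $J_{\wn}\lw\in\sum_{j\geq \wn+1}\langle\omega\rangle J_j\lw$ for $\wn<2\ws+1$, but it says nothing about $J_{2\ws+1}\lw$.

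Consequently your claim that ``the first non-trivial $n$ isolates $J_{2\ws+1}\lw=4\lambda_{\ws}^2\lw$'' from $\sv^{(9)}$ cannot be right even in principle: every monomial of $\sv^{(9)}$ contains exactly one $J$, so all relations obtained from its modes are linear and homogeneous in the vectors $J_j\lw$ over $\langle\omega\rangle$, and such relations are consistent with $J_j\lw=0$ for all $j$; they can never produce the inhomogeneous identity $J_{2\ws+1}\lw=4\lambda_{\ws}^2\lw$. The paper supplies the missing inhomogeneous input from $\sv^{(10)}$, which mixes the pure-$\omega$ term $\frac{8192}{525}\omega_{-1}^5\vac$ with $-\frac{2048}{525}\omega_{-1}^3J_{-1}\vac$: after checking via \eqref{eq:jj-com} and Lemma \ref{lemma:top} that $(J_{-2}^2\vac)_{5\ws+4}\lw=0$, one gets $0=\sv^{(10)}_{5\ws+4}\lw=\frac{2048}{525}(4\lambda_{\ws}^5-\lambda_{\ws}^3J_{2\ws+1})\lw$. (The paper also invokes the simplicity of $M(1)^{+}$ \cite{DM} and \cite[Proposition 11.9]{DL} to guarantee $Y_M(J,x)\lw\neq 0$, another input absent from your sketch, though the $\sv^{(10)}$ identity can substitute for it here.) Your use of $\sv^{(10)}$ only in the small-index regime, to eliminate $J_{j_1}J_{j_2}\lw$ products, misplaces where it is actually needed; for small indices the paper instead steps down with $[\omega_i,J_j]=(3i-j)J_{i+j-1}$ to get $J_n\lw\in\langle\omega\rangle J_{2\ws+1}\lw$. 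The remaining parts of your plan (the polynomial dependence of $\mu_i$ via careful index tracking in $\sv^{(9)}_{3\ws+\wot+4}\lw$, and the reduction of the generated submodule to $\langle\omega\rangle\lw$ using that $M(1)^{+}$ is generated by $\omega$ and $J$) do match the paper.
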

\begin{proof}
Let $n\in\Z$. Applying Lemma \ref{lemma:J-decrease} with $p=1$, $s_1=s$, and $j=n+\ws-1$, we have
\begin{align}
J_{n+2\ws-1-\wi}\omega_{\wi}\lw &
\in \sum_{j\geq n+1}\langle \omega \rangle J_{j}\lw
\end{align}
for $i=0,1,\ldots,\ws$.
Thus, for positive integers $p_1$ and $p_2$ with $p_1+p_2=5$, it follows from \eqref{eq:oj} that
\begin{align}
\label{eq:oj-induction-1}
(\omega_{-\wpp_1}J_{-\wpp_2}\vac)_{n+2\ws+3}\lw&\in \sum_{j\geq n+1}\langle \omega \rangle J_{j}\lw.
\end{align}
For $i_1\in\N$ and $i_2\in\Z_{<0}$, applying Lemma \ref{lemma:top} (2) with $\wq=2, \wpp=1$ and $\wot=\ws$, we have
\begin{align}
\label{eq:oj-induction-2}
\omega_{\wi_2}J_{n+2\ws-\wi_1-\wi_2}\omega_{\wi_1}\lw &
\in \sum_{j\geq n+1}\langle \omega \rangle J_{j}\lw.
\end{align}
For $i_1,i_2\in\N$ such that at least one of $i_1,i_2$ is not $\ws$, applying Lemma \ref{lemma:top} (2) with $\wq=\wpp=2$ and $\wot=\ws$, we have
\begin{align}
\label{eq:oj-induction-3}
J_{n+2\ws-\wi_1-\wi_2}\omega_{\wi_2}\omega_{\wi_1}\lw &
\in \sum_{j\geq n+1}\langle \omega \rangle J_{j}\lw.
\end{align}
Thus, by \eqref{eq:ooj} we have
\begin{align}
\label{eq:oj-induction-4}
&(\omega_{-1}^2J_{-2}\vac-2\omega_{-2}\omega_{-1}J)_{\wn+2s+3}w\nonumber\\
& \equiv
(-\wn-1)J_{\wn}\omega_{s}^2\lw-2(-\ws-1)J_{\wn}\omega_{s}^2\lw\pmod{\sum_{j\geq n+1}\langle \omega \rangle J_{j}\lw}\nonumber\\
&=
(-\wn+2s+1)\lambda_{s}^2J_{\wn}w.
\end{align}
It follows from \eqref{eq:oj-induction-1} and \eqref{eq:oj-induction-4} that
\begin{align}
\label{eq:J-ind}
0&=\dfrac{1}{16}\sv^{(9)}_{\wn+2s+3}\lw\equiv (-\wn+2s+1)\lambda_{s}^2J_{\wn}w \pmod{\sum_{j\geq n+1}\langle \omega \rangle J_{j}\lw}.
\end{align}
Since $M(1)^{+}$ is simple by \cite[Theorem 4.4]{DM}, it follows from \cite[Proposition 11.9]{DL} that
there exists $m\in\Z$ such that $J_{m}\lw\neq 0$ and $J_{\wj}\lw=0$ for all $\wj>m$.
By \eqref{eq:J-ind} we have $m=2s+1$.
By \eqref{eq:lie-oj} and \eqref{eq:J-ind}, an inductive argument shows
\begin{align}
\label{eq:Jinomega}
J_{n}\lw\in \langle \omega \rangle J_{2\ws+1}\lw
\end{align}
for all $n\in\Z$.
Applying the same argument as above, by \eqref{eq:jj-com} and Lemma \ref{lemma:top} we have
\begin{align}
&(J_{-2}^2\vac)_{5\ws+4}\lw
=
\sum_{
\begin{subarray}{l}
i<0,j\in\Z\\
i+j=5\ws+1
\end{subarray}
}(-i-1)(-j-1)J_{i}J_{j}\lw+\sum_{
\begin{subarray}{l}
i\geq 0,j\in\Z\\
i+j=5\ws+1
\end{subarray}
}(-i-1)(-j-1)J_{j}J_{i}\lw\nonumber\\
&=\sum_{i=0}^{2\ws+1}
\sum_{
\begin{subarray}{l}
j\in\Z\\
i+j=5\ws+1
\end{subarray}
}(-i-1)(-j-1)([J_{j},J_{i}]+J_{i}J_{j})\lw\nonumber\\
&=0
\end{align}
and 
\begin{align}
0&=P^{(10)}_{5s+4}w
=(\dfrac{8192}{525}\omega_{-1}^{5}\vac-\dfrac{2048}{525}\omega_{-1}^3J_{-1}\vac)_{5s+4}\lw
=\dfrac{2048}{525}(4\lambda_{s}^{5}-\lambda_{s}^3J_{2s+1})\lw.
\end{align}
Thus
\begin{align}
J_{2s+1}\lw&=4\lambda_{s}^2\lw
\end{align}
and therefore $J_{n}\lw\in \langle \omega \rangle \lw$ for all $n\in\Z$ by \eqref{eq:Jinomega}.

Applying the same argument as above, by Lemma \ref{lemma:top} (2) we have
\begin{align}
0&=\dfrac{1}{16}P^{(9)}_{3s+\wot+4}\lw=(\omega_{-1}^2J_{-2}\vac-2\omega_{-2}\omega_{-1}J)_{3\ws+\wot+4}w\nonumber\\
&=\sum_{\wj\in\Z}
\sum_{
\begin{subarray}{l}\wot\leq \wi_1,\wi_2\leq \ws,\\\wi_1+\wi_2+\wj=\ws+\wot+(2\ws+1)\end{subarray}}
(-\wj+2\wi_1+1)J_{\wj}\omega_{i_1}\omega_{i_2}\lw\nonumber\\
\nonumber\\
&=\sum_{\wj=\ws+\wot+1}^{2\ws+1}
\sum_{
\begin{subarray}{l}\wot\leq \wi_1,\wi_2\leq \ws,\\\wi_1+\wi_2+\wj=3\ws+\wot+1\end{subarray}}
(-\wj+2\wi_1+1)J_{\wj}\lambda_{\wi_1}\lambda_{\wi_2}\lw\nonumber\\
\nonumber\\
&=(\ws-\wot)J_{\ws+\wot+1}\lambda_{\ws}^2\lw+
\sum_{\wj=\ws+\wot+2}^{2\ws+1}
\sum_{
\begin{subarray}{l}\wot\leq \wi_1,\wi_2\leq \ws,\\\wi_1+\wi_2+\wj=3\ws+\wot+1\end{subarray}}
(-\wj+2\wi_1+1)J_{\wj}\lambda_{\wi_1}\lambda_{\wi_2}\lw.
\end{align}
An inductive argument on $\wot=\ws,\ws-1,\ldots$ shows that  for each $\wi\in\{\ws+\wot+1,\ws+\wot+2\ldots, 2\ws+1\}$, $\mu_i$ is a polynomial in 
$\lambda_{i-\ws-1},\ldots,\lambda_{\ws}$.
\end{proof}

Now we give a proof of Theorem \ref{theorem:classification-module}.

\begin{proof}[(Proof of Theorem \ref{theorem:classification-module})]
Let $\ws\in\Z$ with $\ws\geq 2$ and 
$\lambda=(\lambda_{\lfloor\ws/2\rfloor+1},\lambda_{\lfloor\ws/2\rfloor+2},\ldots,\lambda_{\ws})
\in \C^{\ws-\lfloor\ws/2\rfloor-1}\times\C^{\times}$. 
Taking a quotient space of the tensor algebra of $M(1)^{+}\otimes \C[t,t^{-1}]$ by the two sided ideal
generated by the Borcherds identity, 
$J \otimes t^i\ (i>2\ws+1)$, $\omega\otimes t^{i}\ (i>\ws)$,
and $\omega\otimes t^{i}-\lambda_i(\vac \otimes 1)\ (\lfloor\ws/2\rfloor+1\leq i\leq \ws)$, we obtain a pair $(N,\ulw)$ of a weak $M(1)^{+}$-module $N$ and a Whittaker vector $\ulw\in N$ of type $\ul{\lambda}$ such that
$Y_{N}(J,x)\ulw\in x^{-2\ws-2}N[[x]]$ with the following universal property:
for any pair $(U,u)$ of a weak $M(1)^{+}$-module $U$ and a Whittaker vector $u\in U$ of type $\ul{\lambda}$
such that $Y_{U}(J,x)u\in x^{-2\ws-2}U[[x]]$,
there exists a unique weak $M(1)^{+}$-module homomorphism $N\rightarrow U$ which maps $\ulw$ to $u$.
By Lemmas \ref{lemma:simple-hei} and \ref{lemma:gen-omega}, we have
$N=\langle\omega\rangle\ulw\neq 0$, namely, $N$ is a Whittaker module for the Virasoro algebra
in the sense of \cite{LGZ,OW}, and therefore
$N$ is a simple module for the Virasoro algebra by \cite[Corollary 4.2]{OW} and \cite[Theorem 7]{LGZ}. 
In particular, $N$ is a simple weak $M(1)^{+}$-module.

Let $\module$ be a non-zero weak $M(1)^{+}$-module generated by a Whittaker vector $\lw$
of type $\lambda$ for $\omega$.
By \eqref{eq:Jw}, $\module$ is isomorphic to a quotient weak module of $N$ and, moreover, $N\cong M$ since $N$ is simple.
Thus, $M$ is simple and by the last statement in Lemma \ref{lemma:simple-hei}, $M$ is isomorphic to one of the weak $M(1)^{+}$-modules listed in (1) and (2).
Since any Whittaker vector
of type $\lambda$ for $\omega$ in $M$ is a non-zero scalar multiple of  $\lw$ by \cite[Proposition 3.2]{OW} and \cite[Theorem 2.3]{FJK},
the proof is complete.
\end{proof}

\end{document}